\newtheorem{theorem}{Theorem}[section]
\newtheorem{lemma}[theorem]{Lemma}
\newtheorem{corollary}[theorem]{Corollary}
\theoremstyle{definition}
\newtheorem*{assumption}{Assumption}
\newtheorem{definition}[theorem]{Definition}
\theoremstyle{remark}
\newtheorem{remark}{Remark}
\newtheorem{example}[theorem]{Example}
\let\S\@undefined
\DeclareMathOperator{\rank}{rank}
\DeclareMathOperator{\conv}{conv}
\def\BB{\mathscr{B}}
\def\LL{\mathscr L}
\def\FF{\mathscr F}
\tikzstyle{knoten}=[fill,shape=circle,inner sep=2pt,outer sep=0pt,minimum size=2pt]
\tikzstyle{background}=[rectangle,
\title{Bergman complexes of lattice path matroids}
\author{Emanuele Delucchi, Martin Dlugosch}
\address[Emanuele Delucchi]{Departement de Math\'ematiques, Universit\'e de Fribourg, Chemin du Mus\'ee 21, 1700 Fribourg, Switzerland.}
\email[Emanuele Delucchi]{emanuele.delucchi@unifr.ch}
\address[Martin Dlugosch]{Fachbereich Mathematik und Informatik, Universit\"at Bremen,
  Bibliothekstra\ss{}e 1, 28359 Bremen, Bremen, Germany.}
\email[Martin Dlugosch]{mdlug@math.uni-bremen.de }
\begin{document}
\maketitle

\vspace{-10pt}

\begin{abstract}
We give an explicit description of the poset of cells of Bergman
complexes of lattice path matroids and establish a criterion for its simpliciality, in terms of the shape of the
bounding paths.
\end{abstract}

\vspace{-5pt}

\section{Introduction}

The term {\em Bergman fan} has come to denote the polyhedral fan
given as the logarithmic limit set of a complex algebraic subvariety
$V$ of $\mathbb C^n$. Logarithmic limit sets were first
introduced by George M.\ Bergman in \cite{BE71}, and their fan
structure was described by Bieri and Groves \cite{BiGr}. Sturmfels \cite{STU02}
made the key observation that, if $V$ is defined by linear equations,
its Bergman fan can be constructed from the {matroid} associated to
$V$. In this case, the Bergman fan is the tropicalization of the complement variety of the hyperplane arrangement  determined in $V$ by the intersection of the standard coordinate hyperplanes of $\mathbb C^n$. This leads to the very important role played by Bergman fans in tropical geometry (we point the interested reader to the book \cite{McSt} for more on this subject). As Sturmfels' construction can be carried out also for
nonrepresentable matroids, we get a Bergman fan -- indeed, a tropical variety -- associated to every
matroid $M$. There is no loss of information in considering, instead
of the whole fan, just its intersection with the unit sphere: the resulting
spherical complex $\Gamma_M$ is called the {\em Bergman complex} of the given
matroid $M$ (see Section \ref{sec:bergmandef} for the precise definitions).
 The homotopy type of the complex $\Gamma_M$
coincides with that of the order
 complex of $\LL(M)$, the lattice of flats of $M$ (see Section \ref{def:matroidS}). This has been established by
 Ardila and Klivans \cite{AK}, who proved that in fact the order
 complex of $\LL(M)$ subdivides $\Gamma_M$. This raises the
 question of the polyhedral structure of $\Gamma_M$. Here,  progress
 was made by
 Feichtner and Sturmfels \cite{FS04}, who proved that $\Gamma_M$ is
 subdivided by the nested set complex of $M$, a much coarser
 (simplicial) complex than the order complex of $\LL(M)$. As an additional improvement, the second author \cite{Dl11} described a decomposition of the
 matroid types (see Section \ref{def:matroidpoly}) associated to faces
 of the Bergman complex into connected direct summands.   Recently,
 Rinc\'on \cite{Rincon} found and implemented an algorithm to
efficiently compute a certain simplicial subdivision of the nested set
complex, called cyclic Bergman complex.

From a combinatorial point of view, an open problem is to find an
explicit description of the face structure of the Bergman complex
$\Gamma_M$, improving on the aforementioned results which describe
simplicial complexes subdividing $\Gamma_M$. A more general question, of
interest for computations as well as for the structure theory,
and which can't be effectively
answered in general either, is 
whether the Bergman complex of a given matroid is itself simplicial. In this paper we address both the characterization of simpliciality as well as the explicit face structure of Bergman complexes for a special class of matroids.

{\em Lattice path matroids} were introduced by Bonin, de Mier and Noy
\cite{BdMN} as a family of transversal
matroids whose bases can be characterized by means of the
 lattice paths contained in the region of the plane bounded by two given
lattice paths. This class of matroids enjoys a host of nice
enumerative and
structural properties \cite{BdM06}, and can be characterized
among all matroids by a list of excluded minors \cite[Theorem 3.1]{BdM09}. 
Their rich structure theory 
allowed lattice path matroids to attract attention both as a fertile setting for significative 
stepping stones toward general results (e.g., Stanley's $M$-vector conjecture \cite{SJ} or the theory hyperplane splits of matroid polytopes \cite{ChRa}) as well as  by offering a convenient general framework for the study of some subclasses of independent interest, such as generalized Catalan matroids \cite[Section 4]{BdM06}.
In the context of the study of  Grassmannians, where generalized Catalan matroids are called Schubert matroids, lattice path matroids correspond precisely to `Richardson matroids', a special case of the positroids used as indices of cells in Postnikov's stratification of the
totally nonnegative Grassmannian \cite{Post} (see e.g.\ \cite[Introduction]{KLS} for an overview of this subject) -- for these `special
cells' the computation of the corresponding `Grassmann necklace' is
particularly tractable \cite[Section 6]{OH}.\\

In this paper we determine the polyhedral structure of the Bergman
complex of a given lattice path matroid and formulate a necessary and
sufficient condition for $\Gamma_M$ to be simplicial. In the geometric
spirit of lattice path matroids, our characterizations are in terms of
the shape of the bounding paths.

The structure of the paper is as follows. First, in Section \ref{sec:def} we review some basic notions 
and we derive a
description of faces and vertices of the Bergman complex of a lattice path
matroid in terms of {`bays'} and {`land necks'} of the bounding
paths (Lemma \ref{rem:strait}, see also Remark \ref{interpret}).

Section \ref{sec:simpl} contains our first main result, Theorem
\ref{maintheo1}, where, in terms of bays and land necks, we characterize simpliciality of faces of the Bergman
complex and, as a corollary, we characterize those lattice path matroids that have a simplicial Bergman
complex (Corollary \ref{cor:iffs}). 

In Section \ref{sec:faces} we introduce a poset
(again defined in terms of bays and (non-)land necks,
see Definition \ref{def:Q}) which we prove 
 to be isomorphic to the face poset of
the Bergman complex (Theorem \ref{maintheo2}). We close with an explicit expression for the polyhedral structure of faces of the Bergman complex (Corollary \ref{cor:poly}).\\

\noindent{\em Acknowledgments.}  We thank Anna de Mier for helpful
discussions, and the anonymous referees for valuable suggestions leading to substantial improvement of the paper.
Emanuele Delucchi has been partially supported by the Swiss National Science
Foundation professorship grant PP00P2\_150552/1.

\section{Preliminaries}\label{sec:def}

\subsection{Matroids}\label{def:matroidS}
We sketch some of the basics of matroid theory, in order to provide the basic 
definitions and to set some notation. For a 
thorough introduction to the subject and as a standard reference we point to \cite{Ox11}.

\begin{definition}\label{def:matroid}
  Let $E$ denote a finite set, $\mathscr P(E)$ the set of its subsets. 
  A nonempty family $\BB \subseteq \mathscr P(E)$ is the
  set of bases of a matroid on the ground set $E$ if, given
  $B_1,B_2\in \BB$ and $e\in B_1\setminus B_2$, there is $f\in
  B_2\setminus B_1$ such that $(B_1\setminus \{e\}) \cup \{f\} \in
  \BB$. A {\em matroid} can be given as the pair $(E,\BB)$. Given a
  matroid $M$, we will denote by $\BB(M)$ its set of bases. 

A {\em loop} of $M$ is any $e\in E$ that is not contained in any
$B\in \BB(M)$. A {\em coloop} of $M$ is any $e\in E$ that is
contained in every $B\in \BB(M)$.
\end{definition}

\def\rk{\operatorname{rk}}
\begin{remark}\label{def:CD} 

If $M$ is a matroid on the ground set $E$, then all the elements of
  $\BB(M)$ have the same cardinality, which is called the {rank}
  of $M$.
  More generally, given a subset $A\subseteq E$, define the {\em rank} of $A$ to be
$$
\rk(A) := \max \{\vert A\cap B \vert : B\in \BB(M)\}.
$$
Consider the families
  \begin{eqnarray*}
    \BB(M)[A]:= \{B\cap A : B\in \BB(M), \vert A\cap B\vert = \rk(A) \}, \\
    \BB(M)/A:= \{B \setminus  A : B \in \BB(M), \vert A\cap B\vert = \rk(A)\}.
  \end{eqnarray*}
%
These satisfy Definition \ref{def:matroid} and thus describe  
a matroid $M[A]$ on the ground set $A$ and a matroid $M/A$ on the
ground set $E\setminus A$, respectively. 
The matroid $M[A]$ is called
the {\em restriction} of $M$ to $A$, while $M/A$ is the {\em
  contraction} of $A$ in $M$. A matroid that is obtained from $M$ by a
sequence of contractions and restrictions is called a {\em minor} of
$M$.
Notice that for every $A\subseteq E$ the rank of $M[A]$ is
$\rk(A)$. In particular, the rank of $M$ is $\rk(E)$.
\end{remark}

\begin{remark}\label{rem:CoCoCo} 
  Let $M_1$ and $M_2$ be matroids on disjoint ground sets $E_1$,
  $E_2$. Their {\em direct sum} is the matroid $M_1 \oplus M_2$ on the
  ground set $E_1\cup E_2$ with
  set of bases $$\BB(M_1\oplus M_2) =\{B_1\cup B_2 : B_1\in \BB(M_1),\,
  B_2\in \BB(M_2)\}.$$
  A matroid $M$ on the ground set $E$ is {\em connected} if there is
  no nontrivial partition $E=E_1\uplus E_2$ with $M=M[E_1]\oplus
  M[E_2]$. Let $E=E_1\uplus \cdots \uplus E_c$ be a 
  partition of $E$ such that $M= 
  M[E_1]\oplus \ldots \oplus M[E_c]
  $ and $M[E_j] $ is
  connected for every $j=1,\ldots, c$. Then the $M[E_j]$ are uniquely
  determined up to renumbering and are called the {\em
    connected components} of $M$. 
\end{remark}

The {\em closure} of $A$ is the union of all
$X\subseteq E$  such that $X$ contains $A$ and the ranks of $X$ and
$A$ coincide. We denote by $\LL(M)$ the set of all closed sets, also
called {\em flats} of $M$. It is customary to endow $\LL(M)$ with the partial order given by inclusion, which makes it a geometric lattice, referred to as the {\em lattice of flats} of $M$.

 If the rank of $A\subseteq E$ equals its cardinality, we call $A$
 {\em independent}. Let $M_1$ and $M_2$ be matroids on the ground set
 $E$. Then $M_2$ is called a {\em weak map image} of $M_1$ if there is
 a bijection $\varphi: E\to E$ for which the preimage of every
 independent set of $M_2$ is an independent set of $M_1$.


\subsection{Matroid polytopes}\label{def:matroidpoly}

Let $M$ be a matroid of rank $d$ on the ground set
$[n]:=\{1,\ldots,n\}$. 
%
For every $B\in \BB(M)$ we consider a vector $v(B)\in \mathbb R^n$ defined by
  $v(B)_i=1$ if $i\in B$, $v(B)_i=0$ else. 
%
  The {\em matroid polytope} of $M$ is the convex hull
$$P_M:=\conv 
\{v(B) :  B\in \BB(M)\}.
$$
This is a polytope of dimension $n-c(M)$, where $c(M)$ is the number
 of connected components of $M$, 
contained in the hypersimplex
$\Delta_d=\conv\{de_i :  i=1,\ldots,n\}$. It can be readily seen that for two matroids $M_1,M_2$ we have
$P_{M_1\oplus M_2}=P_{M_1}\times P_{M_2}$.

Given a flat $F\in \LL(M)$, define the halfspace
$$
H^+_F:=\{x\in \mathbb R^n  :  \sum_{i\in F} x_i \leq \rank(F)\}
$$
and let $H_F$ be the hyperplane bounding $H_F^+$. According to \cite{FS04}, we have
$$
P_M=\Delta_d\cap \bigcap_{F\in \LL(M)} H_F^+.
$$

Let us consider the poset $\FF(M)$ of faces (closed cells) of $P_M$ ordered by
inclusion (see
\cite[Definition 2.6]{Zi95}). Every $f\in
\FF(M)$ is the matroid polytope of a matroid $M_f$ 
called ``the matroid type'' of the face $f$, with set of bases
$$
\BB(M_f) = \{B\in \BB(M)  :  v(B) \in V(f)\},
$$
where $V(f)$ denotes the set of vertices of $f$. Notice that a matroid
type's ground set is always the full ground set $E$, while minors have
strictly smaller ground sets. In general, a matroid type is a direct
sum of minors.

A maximal element $f\in\FF(M)$ (a {\em facet} of $P_M$) can be of
one of two types:
\newcommand{\fl}[1]{F_{#1}}
\begin{itemize}
\item[(i)] $f$ lies on the boundary of $\Delta_d$,
\item[(ii)] $f$ meets the interior of $\Delta_d$.
\end{itemize}


\begin{remark}\label{rem:loop}\hfill
  \begin{itemize}
  \item[(i)] If $f$ is of type (i), then there is $j$ such that $x_j=0$ for
    all $x\in f$. 
    This means that $j$ is not contained in any basis of $M_f$,
    i.e., $j$ is a loop of $M_f$. Conversely, if $j$ is a loop of
    $M_f$, then $f$ is of type (i).

    \item[(ii)] Following
    \cite[Prop. 2.6]{FS04}, the facet $f$ is of type (ii) if and only if $f\subseteq H_F$ for an $F\in \LL(M)$ such that $M/F$ and
    $M[F]$ are both connected.  Such an $F$ is called a {\em flacet} of
    the matroid $M$, and is uniquely determined by $f$.
    We will write $\fl{f}$ for the flacet corresponding to the
    type-(ii) facet $f$. The bases lying on this facet are the bases of the matroid $M/{F_f}\oplus M[F_f]$.
%
\label{rem:first}
\end{itemize}
\end{remark}

\subsection{Bergman complexes}\label{sec:bergmandef}

Let $M$ be a connected matroid of rank $d$ on the ground set $[n]$.

We consider the polar dual $P_M^{\vee}$ of $P_M$ (see,
e.g., \cite[Definition 2.10]{Zi95}) and let $\FF^\vee(M)$ denote its
poset of faces. Duality determines canonical order-reversing bijections
$\FF(M)\stackrel{\vee}{\to}\FF^\vee(M) \stackrel{\vee}{\to}\FF(M)$. In
particular, every face $\alpha\in \FF^\vee(M)$ has a matroid type $M_\alpha:=M_{\alpha^\vee}$.

\begin{definition}
The {\em Bergman complex} of $M$ is the polyhedral subcomplex $\Gamma_M$ of
$P_M^{\vee}$ with set of faces
$$
\Gamma_M:=\{\alpha\in \FF^\vee(M)  :  M_\alpha \textrm{ loopfree}\},
$$
which we regard as a downwards closed subposet of $\FF^\vee(M)$.
\end{definition}

\begin{remark}
Bergman's original definition of this space \cite{B71} is set theoretical, while the polyhedral structure was first studied by Bieri and Groves \cite{BiGr}. Our definition follows Feichtner and Sturmfels' approach \cite{FS04} and describes  what  Ardila and Klivans \cite{AK} call \emph{coarse subdivision}.
\end{remark}

\begin{remark}\label{rem:vertices}
  According to Remark \ref{rem:loop}, the vertices of the Bergman
  complex are 
  exactly the faces of the form $f^\vee$ where $f$ is a facet of type
  (ii).
\end{remark}

Let $\gamma_1,\ldots,\gamma_k$ be the vertices of a face $\alpha\in
\Gamma_M$. The face $\alpha^\vee$ of $P_M$ is the
intersection of its adjacent facets $\gamma_1^\vee,\ldots , \gamma_k^\vee$. 
%
Therefore 
\begin{displaymath}
\{v(B) : B\in \BB(M_{\alpha})\} =V(\alpha^\vee) = \bigcap_{i=1}^k
V(\gamma_i^\vee)=\bigcap_{i=1}^k 
\{v(B) : B\in \BB(M_{\gamma_i}) \}, 
\end{displaymath}
and 
we can write
\begin{displaymath}
  \BB(M_\alpha)=
 \bigcap_{i=1}^k\BB(M_{\gamma_i})
.
\end{displaymath}

\begin{remark}
  \label{rem:simp}
  The face $\alpha\in \Gamma_M$ is simplicial if every proper subset
  of its vertices determines a proper subface. Equivalently, $\alpha$
  fails to be simplicial if and only if
$$
\BB(M_\alpha) = \bigcap_{\gamma\in U} \BB(M_\gamma)
$$
for a proper subset $U\subsetneq V(\alpha)$.
\end{remark}
\subsection{Lattice path matroids}

Let $p,q$ be lattice paths in the plane with common starting point (say at the origin
$(0,0)$) and ending point (say at a point $(m,r)$). We will assume that
$p$ never goes below $q$. We will write $p$ and $q$ as words
$$
p=p_1 \ldots p_{m+r}\quad\quad\quad q=q_1\ldots q_{m+r}
$$
where each letter is $N$ or $E$, signaling a step North $(0,1)$ or
East $(1,0)$.

\def\call{\mathcal}
\def\II{\call I}
\def\SS{\call S}
\newcommand{\PP}[1]{\lceil #1 \rfloor}

By $\PP{p,q}$ we will denote the set of lattice paths from $(0,0)$ to $(m,r)$ that never go above $p$ or below $q$.

For any $s\in \PP{p,q}$ write $s=s_1\ldots s_{m+r}$ and define
$$
B(s):=\{i :  s_i=N\}.
$$


\begin{lemma}[\!\!\cite{BdMN}]
The set $\{B(s) :  s\in \PP{p,q} \}$ is the set of bases
  of a matroid $M(p,q)$ on the ground set $[m+r]$.
\end{lemma}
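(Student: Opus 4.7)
The plan is to exhibit $M(p,q)$ as a transversal matroid and invoke the classical Edmonds--Fulkerson theorem (that the transversals of a finite family of subsets of a ground set are the bases of a matroid). Concretely, for each $j\in\{1,\ldots,r\}$ let $\alpha_j$ denote the index of the $j$-th $N$ in the word $p$ and $\beta_j$ the index of the $j$-th $N$ in $q$. Since $p$ lies weakly above $q$, to reach height $j$ the path $p$ uses at most as many East steps as $q$, hence $\alpha_j\leq\beta_j$; moreover $\alpha_j<\alpha_{j+1}$ and $\beta_j<\beta_{j+1}$. Set
\[
A_j:=\{\alpha_j,\alpha_j+1,\ldots,\beta_j\}\subseteq [m+r].
\]

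Next I would establish the equality
\[
\{B(s)\mid s\in \PP{p,q}\}=\bigl\{\{b_1<\cdots<b_r\}\subseteq[m+r]\,\bigm|\, b_j\in A_j\text{ for all }j\bigr\}.
\]
One direction is geometric: if $s$ lies between $p$ and $q$, then the position of its $j$-th $N$-step is at least $\alpha_j$ (otherwise $s$ would rise above $p$) and at most $\beta_j$ (otherwise $s$ would fall below $q$), so $b_j\in A_j$. Conversely, given $b_1<\cdots<b_r$ with $b_j\in A_j$, the word with $N$ at positions $b_1,\ldots,b_r$ and $E$ elsewhere yields a path whose $j$-th $N$-step lies in $[\alpha_j,\beta_j]$, so the resulting path lies in $\PP{p,q}$.

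Finally, I would identify the right-hand side above with the set of all transversals of the family $(A_1,\ldots,A_r)$. The containment $\supseteq$ is immediate. For the reverse, one exploits the interval structure of the $A_j$ together with $\alpha_1\leq\alpha_2\leq\cdots$ and $\beta_1\leq\beta_2\leq\cdots$ via a standard swapping argument: if some transversal assigns $b_i\in A_i$ and $b_j\in A_j$ with $i<j$ but $b_i>b_j$, the inequalities $\alpha_i\leq\alpha_j\leq b_j<b_i\leq\beta_i\leq\beta_j$ show $b_j\in A_i$ and $b_i\in A_j$, so the pair can be swapped; iterating sorts the representatives into increasing order. Combining this with Edmonds--Fulkerson identifies $\{B(s)\mid s\in\PP{p,q}\}$ with the bases of the transversal matroid presented by $(A_1,\ldots,A_r)$.

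The main obstacle is the last reordering step: while the geometric dictionary between lattice paths and increasing sequences $b_1<\cdots<b_r$ with $b_j\in A_j$ is transparent, one must verify carefully that the increasing-endpoint structure of the $A_j$ permits every unordered transversal to be re-indexed in the natural order. Once this combinatorial fact is in place, the matroid property of $\{B(s)\}$ is an immediate consequence of the transversal-matroid theorem.
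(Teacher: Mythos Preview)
The paper does not supply its own proof of this lemma: it is stated with the citation \cite{BdM06} and no argument follows. Your proposal is correct, and in fact it reproduces the approach taken in the cited source (and in \cite{BdMN}), where lattice path matroids are \emph{defined} as the transversal matroids of the interval system $(A_1,\ldots,A_r)$ with $A_j=[\alpha_j,\beta_j]$ and the identification with lattice paths in $\PP{p,q}$ is the content of \cite[Theorem~3.3]{BdMN}. Your swapping argument for the reordering step is the standard one for families of intervals with increasing left and right endpoints, and it goes through without difficulty; so the ``main obstacle'' you flag is not a genuine gap.
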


\begin{definition}
  A {\em lattice path matroid} is any matroid of the form $M(p,q)$ for
  two lattice paths $p,q$ as above.
\end{definition}

\begin{remark}[{\!\!\cite[Theorem 3.6]{BdMN}}] 
\label{char:connected} A lattice path matroid $M(p,q)$ is connected if and only if the
paths $p$ and $q$ never touch except at $(0,0)$ and $(m,r)$.
\end{remark}

\begin{assumption}
  Unless otherwise stated, in the following we will consider only connected
  lattice path matroids.
\end{assumption}

In order to understand the faces of the Bergman complex of lattice path matroids, we will often use the fact that contractions and deletions of lattice path matroids are again lattice path matroids whose bounding paths can be constructed  directly from the bounding paths of the original matroid (see \cite[Section 3.1]{BdM06})

\subsection{Bergman complexes of lattice path matroids}
\label{sec:berglpm}

Let $M=M(p,q)$ be a connected lattice path matroid.
%
%
%
%
%
  Given $B\in \BB(M)$, here and in what follows we will write $p(B)$ for the corresponding
  lattice path. 
Let $\alpha$ be a face of
the Bergman complex of $M$. 
The corresponding path $p(B)$ of a basis $B\in M_\alpha$ is called path of $M_\alpha$ or just $M_\alpha$-path.  
A {\em node} of the matroid type $M_\alpha$ is any integer point of the lattice 
that is visited by every path $p(B)$ with $B\in \BB(M_\alpha)$.  

\begin{definition}[Fundamental flats, bays, land necks]\label{def:S}
We will say that a lattice point $(y_1,y_2)$ on the upper path $p$ is
a {\em bay} of 
$p$ if
  $p_{y_1+y_2}p_{y_1+y_2+1}=EN$. Similarly, a point $(z_1,z_2)$ on $q$ is a bay for $q$ if
  $q_{z_1+z_2}q_{z_1+z_2+1}=NE$. Let $U_p$, resp.\ $U_q$, be the set of bays of $p$,
  resp. of $q$.
 The {\em fundamental flats} of $M$ are sets of the form $\{ 1,\ldots
 , y_1+y_2 \}$ for an $(y_1,y_2)\in U_p$ or of the form
 $\{z_1+z_1+1,\ldots ,m+r\}$ for $(z_1,z_2)\in U_q$.
We point the reader to \cite[Section 5]{BdM06} for further detail on the properties of these sets -- here it is enough to recall that these are flats of the matroid $M$. 

We will say that $i\in [m+r]$ is a {\em
  land neck} of $M(p,q)$ if the endpoint of
$p_1\ldots p_{i+1}$ lies one unit North of the endpoint  of
$q_1\ldots q_i$. The set of land necks is $S(p,q)$.
\end{definition}

\begin{figure}[h]
  \centering
  \begin{tikzpicture}
    \filldraw[fill=gray!5]
    (0,0) -- (1,0) -- (1,0.5)  -- (2,0.5) -- (2,1.5)  -- (2.5,1.5) -- (2.5,2.5)  -- (4,2.5) --
    (4,4)
     -- (2,4)  -- (2,3)  -- (1.5,3)  -- (1.5,1.5)  -- (1,1.5)  --
     (1,1)  -- (0,1);
   \filldraw[color=gray!20,fill=gray!25] (1.5,0) -- (2,0) -- (0,2) -- (0,1.5);
    \draw (1.75,0) node[inner sep=1pt][anchor=north] {\small 4}; 
    \draw (0,1.75) node[inner sep=1pt][anchor=east] {\small 4}; 
   \filldraw[color=gray!20,fill=gray!25] (3,0) -- (3.5,0) -- (0,3.5) -- (0,3);
    \draw (3.25,0) node[inner sep=1pt][anchor=north] {\small 7}; 
    \draw (0,3.25) node[inner sep=1pt][anchor=east] {\small 7}; 
    \draw (0,0) grid [step=0.5cm] (1,1); 
    \draw (1,0.5) grid [step=0.5cm] (2,1.5); 
    \draw (1.5,1.5) grid [step=0.5cm] (2.5,3);
    \draw (2,2.5) grid [step=0.5cm] (4,4); 
    \draw (1,1) -- (1,1.5);
    \draw (1.5,1.5) -- (1.5,3); 
    \draw (2,1.5) -- (2.5,1.5); 
    \draw (2,3) -- (2,4); 
    \draw (2.5,2.5) -- (4,2.5); 
    \draw [ultra thick] (1,.5) -- (1,1); 
    \node [fill=black,shape=circle,inner sep=2.5pt,] at (1.5,1.5){}; 
    \node [fill=black,shape=circle,inner sep=2.5pt] at (2,3){}; 
    \node [fill=black,shape=circle,inner sep=2.5pt] at (1,1){}; 
    \node [fill=black,shape=rectangle,inner sep=3pt] at (1,0.5){}; 
    \node [fill=black,shape=rectangle,inner sep=3pt] at (2,1.5){}; 
    \node [fill=black,shape=rectangle,inner sep=3pt] at (2.5,2.5){}; 
%
%
%
%
  \end{tikzpicture}
\caption{A lattice path matroid of rank $8$ on the ground set
  $[16]$. The black dots
mark the $p$-bays, the black squares mark the $q$-bays and the thick
line shows that the singleton $\{4\}$ is a land neck (however, note
that the singleton $\{7\}$ is not).}\label{fig:defs}
\end{figure}
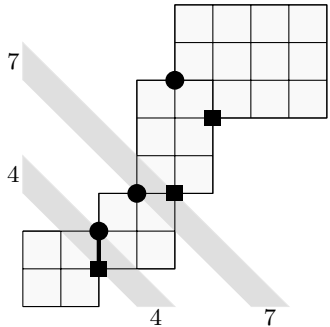

The following  lemma identifies vertices of the Bergman complex of a lattice path matroid.

\begin{lemma}
\label{rem:strait}
  The flacets (vertices of the Bergman complex) of a connected lattice path matroid $M(p,q)$ are
  \begin{itemize}
  \item[(a)]  the fundamental flats, 
  \item[(b)] the singletons $\{i\}\subseteq [m+r]\setminus S(p,q)$.
  \end{itemize}
\end{lemma}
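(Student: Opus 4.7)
The plan is to recognize the statement as a direct translation of the two cited results into the bay/land-neck language developed in this section. The tool at the heart of the argument is the definition that $F\in\LL(M)$ is a flacet iff both the restriction $M[F]$ and the contraction $M/F$ are connected, combined with \cite[Theorem 3.5]{BdM06}: an LPM is connected exactly when its bounding paths do not touch away from their common endpoints.

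First I would dispose of the fundamental flats. Given a bay $(y_1,y_2)\in U_p$, the flat $F=\{1,\ldots,y_1+y_2\}$ corresponds geometrically to the lattice point $(y_1,y_2)$ on $p$. The restriction $M[F]$ is the LPM whose bounding paths are the initial portion $p_1\cdots p_{y_1+y_2}$ of $p$ and the initial portion of $q$ reaching $(y_1,y_2)$; the $EN$ configuration at the bay ensures that these two sub-paths meet only at $(0,0)$ and $(y_1,y_2)$. The contraction $M/F$ is similarly the LPM determined by the complementary portions of $p$ and $q$, which again share only their endpoints. By the connectedness criterion both minors are connected, so $F$ is a flacet; a symmetric argument handles bays of $q$.

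For the singletons, $\{i\}$ is automatically a flat in our setting (a connected loop-free LPM has no two parallel elements forming a non-trivial closure), and $M[\{i\}]$ is trivially connected, so the only condition to check is connectedness of $M/\{i\}$. This is precisely the content of \cite[Corollary 2.14]{BdM09}, which, translated into the geometry of bounding paths, asserts that $M/\{i\}$ is connected iff the pinching condition at step $i$ fails --- that is, iff $i\notin S(p,q)$.

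The remaining step, and the main obstacle, is to show exhaustiveness: that no other flat of $M(p,q)$ is a flacet. Using the full description of flats provided by \cite[Theorem 5.3]{BdM06}, each flat $F$ not of the above two forms contains an intermediate lattice point where the bounding paths of $M[F]$ or $M/F$ meet, yielding a non-trivial direct sum decomposition and hence disconnectedness of one of the two minors. Most of the effort is the translation between the combinatorial language of \cite{BdM06,BdM09} and the bay/land-neck dictionary established above; once this is set up, the argument reduces to direct inspection.
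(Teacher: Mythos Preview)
The paper does not prove this lemma at all: it is stated immediately after the sentence ``The following is a rephrasing of Corollary~2.14 of \cite{BdM09} and Theorem~5.3 of \cite{BdM06},'' and no argument is given. Your proposal is therefore not competing with a proof in the paper but rather supplying the translation the authors leave implicit. In that sense your plan---verify that fundamental flats and non--land-neck singletons are flacets using the connectedness criterion \cite[Theorem~3.5]{BdM06}, then use the flat description \cite[Theorem~5.3]{BdM06} to rule out all other flats---is exactly the intended unpacking.

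One point in your sketch is not right, however. You write that ``$\{i\}$ is automatically a flat in our setting (a connected loop-free LPM has no two parallel elements forming a non-trivial closure).'' This is false: $U_{1,2}=M(NE,EN)$ is a connected loop-free LPM in which $1$ and $2$ are parallel, so $\{1\}$ is not closed. The correct statement is that if $i\notin S(p,q)$ then $\{i\}$ is a flat; equivalently, parallel pairs in a connected LPM occur only at land necks. This is easy to see geometrically (two elements $i,j$ are parallel iff swapping the $i$-th and $j$-th steps of every bounding-path pair leaves the region unchanged, which forces a width-one strip between them), but it does require an argument and should not be asserted as automatic. Once this is fixed, the rest of your outline goes through.
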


\begin{proof}
According to Remark \ref{rem:vertices} and Remark \ref{rem:loop}, we need to identify the connected flats of $M(p,q)$ whose contraction is also connected. To this end, recall  that contractions and deletions of lattice path matroids are again lattice path matroids. More precisely, e.g.\ following \cite[Section 3]{BdM06},  for every $\{i\}\in [m+r]$ a representation of the contraction of $\{i\}$ in $M(p,q)$ is obtained as follows. If $i$ is not a loop nor a coloop, remove the last North step of $p$ before or at $p_i$ as well as the first North step of $q$ after or at $q_i$. If $i$ is a loop (resp.\ a coloop), then remove the common East (resp.\ North) step at $p_i = q_i$.
	With this we can already see  that part (b) of the claim identifies exactly the one-element connected flats whose contraction is connected (see  Figure \ref{fig:defs} for an illustration of this definition). 
		
By Theorem 5.7 of \cite{BdM06}, the nontrivial connected flats of $M(p,q)$ are 
\begin{itemize}
\item[(1)] the fundamental flats and 
\item[(2)] the flats of the form $F_y \cap G_z$, where $F_y$ (resp.\ $G_z)$ is the fundamental flat corresponding to some $y=(y_1,y_2)\in U_p$ (resp.\ some $z=(z_1,z_2)\in U_q$), and for which $y_1 > z_1$ (see e.g.\ Figure \ref{figfg}).\\
Moreover, in this case we have $\rk(F_y \cap G_z)=y_2-z_2$. 
\end{itemize}
We are thus left with showing that contractions of fundamental flats are connected, while contractions of the flats described in (2) are not.

First, consider a fundamental flat, say $F_y$ for some $y=(y_1,y_2)\in U_p$. By Remark \ref{def:CD}, the bases of the contraction to $F_y$ are of the form $B\setminus F_y$ where $B$ is a basis of $M$  whose intersection with $F_y$ has maximal rank, i.e., $\vert B\cap \{1,\ldots , y_1+y_2\}\vert=y_2$. These are the bases represented by paths which reach height $y_2$ in the first $y_1+y_2$ steps. It is easy to see that such paths are exactly those passing through $y$. We conclude that a lattice path representation for $M(p,q)/F_y$ consists of all lattice paths starting at $y$ and contained in $\lceil p,q \rfloor$.  With Remark \ref{char:connected} we see that  the bounding paths of this representation never meet except at $y$ and $(m,r)$ (since $M(p,q)$ is connected and $y$ is a $p$-bay), implying connectedness of the contraction. See Figure \ref{figf} for an illustration of this situation. The case of fundamental flats corresponding to $q$-bays is treated analogously.

Now we turn to the connected flats of type $(2)$, and we will prove that their contraction cannot be connected. In fact, let $F_y$ and $G_z$ as in (2) and write $y=(y_1,y_2)$, $z=(z_1,z_2)$. By the same argument as above we see that bases of $M(p,q)/(F_y \cap G_z)$ correspond to paths with $y_2-z_2$ North steps between
$\{z_1+z_2+1,\ldots,y_1+y_2\}$, which are exactly the paths passing through both $y$ and $z$. Thus, bases of 
the contraction correspond to concatenations of any path in $\lceil p,q \rfloor$ ending in $z$ with any path in $\lceil p,q \rfloor$ beginning at $y$ (Figure \ref{figfg} illustrates this situation). With Remark \ref{rem:CoCoCo} we see that  $M(p,q)/(F_y \cap G_z)$ is not connected.
%
\end{proof}

\begin{figure}
\centering
\begin{subfigure}[b]{0.3\textwidth}
\begin{tikzpicture}
\filldraw[color=gray!20,fill=gray!25] (0,0) -- (1,0) -- (1,1) -- (0,1);
\filldraw[color=gray!20,fill=gray!25] (1,0.5) -- (1,1.5) -- (1.5,1.5) -- (01.5,0.5);
\filldraw[color=gray!20,fill=gray!25] (1.5,0.5) -- (1.5,3) -- (2,3) -- (2,0.5);
\filldraw[color=gray!20,fill=gray!25] (2,3) -- (2.5,2.5) -- (2.5,1.5) -- (1.5,1.5);
\draw (0,0) grid [step=0.5cm] (1,1);
\draw (1,0.5) grid [step=0.5cm] (2,1.5);
\draw (1.5,1.5) grid [step=0.5cm] (2.5,3);
\draw (2,2.5) grid [step=0.5cm] (4,4);
\draw (1,1) -- (1,1.5);
\draw (1.5,1.5) -- (1.5,3);
\draw (2,1.5) -- (2.5,1.5);
\draw (2,3) -- (2,4);
\draw (2.5,2.5) -- (4,2.5);
\node [fill=black,shape=circle,inner sep=2pt, label=above left:{$y$}] at (2,3) {};
\end{tikzpicture}
\end{subfigure}
\begin{subfigure}[b]{0.3\textwidth}
\begin{tikzpicture}
\draw (2,3) grid [step=0.5cm] (4,4);
\draw (2,3) -- (2,4);
\draw (2,3) -- (4,3);
\node [fill=black,shape=circle,inner sep=2pt] at (2,3){};
\node [fill=white,shape=circle,inner sep=2pt] at (0,0){};
\end{tikzpicture}

\end{subfigure}
\caption{Illustration for the proof Lemma \ref{rem:strait}. Left-hand side: a fundamental flat $F_y=\{1,\ldots,y_1 + y_2\}$ corresponding to a $p$-bay $z=(y_1,y_2)$. Right-hand side:  its contraction $M/ F_y$.} 
\label{figf}
\vspace{0.5cm}

\begin{subfigure}[b]{0.3\textwidth}
\begin{tikzpicture}
\filldraw[color=gray!20,fill=gray!25] (1,0.5) -- (1,1) -- (0.5,1);
\filldraw[color=gray!20,fill=gray!25] (1,0.5) -- (1,1.5) -- (1.5,1.5) -- (1.5,0.5);
\filldraw[color=gray!20,fill=gray!25] (1.5,0.5) -- (1.5,3) -- (2,3) -- (2,0.5);
\filldraw[color=gray!20,fill=gray!25] (2,3) -- (2.5,2.5) -- (2.5,1.5) -- (1.5,1.5);
\draw (0,0) grid [step=0.5cm] (1,1);
\draw (1,0.5) grid [step=0.5cm] (2,1.5);
\draw (1.5,1.5) grid [step=0.5cm] (2.5,3);

\draw (2,2.5) grid [step=0.5cm] (4,4);
\draw (1,1) -- (1,1.5);
\draw (1.5,1.5) -- (1.5,3);
\draw (2,1.5) -- (2.5,1.5);
\draw (2,3) -- (2,4);
\draw (2.5,2.5) -- (4,2.5);
\node [fill=black,shape=circle,inner sep=2pt,label=below right:{$z$}] at (1,0.5){};
\node [fill=black,shape=circle,inner sep=2pt,label=above left:{$y$}] at (2,3){};
\end{tikzpicture}
\end{subfigure}
\hspace{2cm}
\begin{subfigure}[b]{0.3\textwidth}
\begin{tikzpicture}
\draw (1.5,1.5) grid [step=0.5cm] (3.5,2.5);
\draw (0.5,1) grid [step=0.5cm] (1.5,1.5);
\draw (0.5,1) -- (1.5,1);
\draw (1.5,1) -- (1.5,2.5);
\draw (1.5,1.5) -- (3.5,1.5);
\node [fill=black,shape=circle,inner sep=2pt] at (1.5,1.5){};
\node [fill=white,shape=circle,inner sep=2pt] at (0,0){};
\end{tikzpicture}
\end{subfigure}
\caption{Illustration for the proof Lemma \ref{rem:strait}. Left-hand side: a proper, non-trivial, connected flat $F_y\cap~G_z=\{z_1+z_2+1,\ldots,y_1+y_2\}$ of type (2). Right-hand side: its contraction $M / (F_y\cap~G_z)$.}
\label{figfg}
\end{figure}

Through Lemma \ref{rem:strait} we obtain 
 the following geometric interpretation of
Remarks \ref{rem:loop} and \ref{rem:vertices}.


\begin{remark}[Faces of the Bergman complex in terms of the lattice paths]\label{rem:FBCLP}
  Let $F$ be a flacet of $M=M(p,q)$ corresponding to a facet $f$ of the
  matroid polytope $P_M$ and consider
  $B\in \BB(M)$. 
  Then $B\in \BB(M_f)$ 
  if and only if one of the following holds
  \begin{itemize}
  \item[(a)] $F$ is a fundamental flat  and $p(B)$ goes through the corresponding bay,
  \item[(b)] $F=\{i\}$ and 
    $p(B)_i=N$.
  \end{itemize}
  We will then say that the path $p(B)$ {\em satisfies the constraint}
  imposed by $F$. 
In both cases we can describe the matroid type corresponding to a single vertex $F_f$ of the Bergman complex by 
\begin{displaymath}
M_f=M/{F_f} \oplus M[F_f].
\end{displaymath}

%
\label{interpret}
More generally, using this language of `paths' and `constraints' we can say that
 faces of the Bergman complex correspond to (the matroid type defined
 by paths satisfying the constraints given by) collections of
 fundamental flats and non-land-necks. Conversely, any such collection
 corresponds to a face of the Bergman complex provided that the paths
 satisfying its constraints determine a loopfree matroid.

With Remark \ref{rem:simp} we can also say that a face of the Bergman
complex of a lattice path matroid  is simplicial if and only if none
of the contraints associated to it is redundant (i.e., by removing any
of those we obtain a strictly smaller face).
\end{remark}

\begin{example}
On the left of
Figure \ref{fig:flacets} we have marked some of the flacets of the
lattice path matroid of Figure \ref{fig:defs}. 
The associated fundamental flats correspond to the $p$-bays $(3,3), (4,6)$
and the $q$-bay $(2,1)$. The paths that pass through these three
points and go North at their $8$th step   
define the bases of a matroid $M'$, of which we give a lattice path
representation on the right hand side
(we remark that in order to get such a presentation of the matroid
type, one has to change the initial order of the ground set). We 
see that $M'$ is loopfree, thus it is the matroid type of a face
$f$ of the Bergman complex, and that $\{8\}$ is the only singleton
which is a vertex of $f$. 
\begin{figure}[h]
\centering
\begin{subfigure}[b]{0.3\textwidth}
\begin{tikzpicture}
\filldraw[color=gray!20,fill=gray!25] (4,0) -- (3.5,0) -- (0,3.5) -- (0,4);
\draw (0,0) grid [step=0.5cm] (1,1);
\draw (1,0.5) grid [step=0.5cm] (2,1.5);
\draw (1.5,1.5) grid [step=0.5cm] (2.5,3);
\draw (2,2.5) grid [step=0.5cm] (4,4);
\draw (1,1) -- (1,1.5);
\draw (1.5,1.5) -- (1.5,3);
\draw (2,1.5) -- (2.5,1.5);
\draw (2,3) -- (2,4);
\draw (2.5,2.5) -- (4,2.5);
\draw (3.75,0) node[inner sep=1pt][anchor=north] {\small 8};
\draw (0,3.75) node[inner sep=1pt][anchor=east] {\small 8};
\node [fill=black,shape=circle,inner sep=2pt,] at (1.5,1.5){};
\node [fill=black,shape=circle,inner sep=2pt] at (2,3){};
\node [fill=black,shape=circle,inner sep=2pt] at (1,0.5){};
\node [] at (1,1.7){\small($3,3$)};
\node [] at (1.5,3.2){\small($4,6$)};
\node [] at (1.5,0.3){\small($2,1$)};
\end{tikzpicture}
\end{subfigure}
\hspace{3cm}
\begin{subfigure}[b]{0.3\textwidth}
\begin{tikzpicture}
\draw (0,0) grid [step=0.5cm] (1,0.5);
\draw (1,0.5) grid [step=0.5cm] (1.5  , 1.5);
\draw (1.5,1.5) grid [step=0.5cm] (2,2.5);
\draw (2,2.5) grid [step=0.5cm] (4,3.5);
\draw (1,0.5) -- (1,1.5);
\draw (1.5,1.5) -- (2,1.5);
\draw (1.5,1.5) -- (2,1.5);
\draw (1.5,1.5) -- (1.5,2.5);
\draw (2,2.5) -- (2,3.5);
\draw (2,2.5) -- (4,2.5);
\draw (4,3.5) -- (4,4);
\node [] at (4.2,3.75){\small 8};
\node [] at (1.35,1.8){\small 7};
\node [] at (1.35,2.3){\small 9};
\end{tikzpicture}
\end{subfigure}
\caption{Some flacets and a lattice representation of the matroid type
  $M_f$, where$f^\vee$ has those flacets as vertices.}
\label{fig:flacets}
\end{figure}
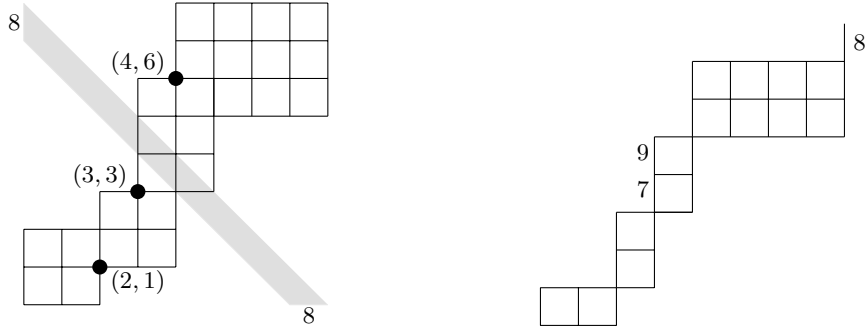
On the other hand, the $p$-bay $(3,3)$ and the $q$-bay $(4,3)$ do not
define a face of the Bergman complex, since every path that goes
through both will go East on the $7$th step, thus $\{7\}$ will be a
loop of the corresponding matroid type. Analogously, the $p$-bay $(3,3)$
and the two singletons $\{5\},\{6\}$ do not define a face of the
Bergman complex because every path that goes through $(3,3)$ and goes
North at the $5$th and $6$th step must go East (along $q$) on the
$4$th step, thus $\{4\}$ will be a loop of the corresponding matroid type.
\end{example}

\section{A simpliciality criterion}\label{sec:simpl}

The goal of this section is to give a complete characterization of
which lattice path matroids possess a simplicial Bergman
complex. After the discussions of Section \ref{sec:def}, it is clear
that it will be enough to consider connected (and in particular
loopfree) matroids.

\begin{theorem}
 \label{maintheo1}
Let $\alpha$ be a face of the Bergman complex of a connected lattice
path matroid $M(p,q)$. Then $\alpha$ is simplicial unless the flacets
corresponding to its vertices include
\begin{itemize}
\item[(1)] a fundamental flat $F$ corresponding to a bay $(x,z)$ of
  $p$ and
\item[(2)] a fundamental flat $G$ corresponding to a bay $(x,y)$ of
  $q$
\end{itemize}
with $z-y>1$.
\end{theorem}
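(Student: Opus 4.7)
The plan is to prove the contrapositive: if $\alpha$ is not simplicial, then $V(\alpha)$ contains a fundamental flat $F$ coming from a $p$-bay $(x,z)$ and a fundamental flat $G$ from a $q$-bay $(x,y)$ with $z-y>1$. By the criterion at the end of Section~\ref{sec:bergmandef}, non-simpliciality provides a \emph{redundant} vertex $F_0\in V(\alpha)$, whose imposed constraint follows from the remaining ones. Translating each constraint into a geometric condition on $p(B)$ (``pass through a specified corner'' for a bay flacet; ``take an $N$-step at a specified position'' for a singleton), I would split into two cases depending on the type of $F_0$ and show that both force the presence of an aligned bay pair with $z-y>1$.

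The core case is $F_0=\{i\}$ singleton. Among the other vertices, pick the latest required corner with step-count $\le i$ and the earliest required corner with step-count $\ge i$, call them $(x_-,y_-)$ and $(x_+,y_+)$. Between them any path in $\BB(M_\alpha)$ uses $x_+-x_-$ eastward and $y_+-y_-$ northward steps, so step $i$ can be an $E$-step \emph{unless} $x_-=x_+$, forcing a pure vertical climb. Redundancy of $\{i\}$ therefore forces $x_-=x_+$, placing the two corners at a common $x$-coordinate; since a monotone path admits at most one $EN$-corner and one $NE$-corner per $x$-value, the two flacets must be a $q$-bay $(x,y_-)$ and a $p$-bay $(x,y_+)$, giving the required aligned pair. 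Ruling out $y_+-y_-=1$ is the second half of the argument: I would show that in this tight case, the contraction $M/\{i\}$ splits as a direct sum of the sub-LPM determined by the portion of $p,q$ lying to the west of $x$-column (in which $i$ cannot appear) and the sub-LPM determined by the portion to the east, since every basis containing $i$ decomposes as $B_-\cup\{i\}\cup B_+$ with $B_\pm$ drawn independently from the two sides. Hence $M/\{i\}$ is disconnected, so $\{i\}$ is not a flacet, contradicting $F_0\in V(\alpha)$ and forcing $y_+-y_->1$.

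The remaining case, $F_0$ a fundamental flat (say corresponding to a $p$-bay $(x,z)$; the $q$-bay case is symmetric), I would reduce to Case~1: redundancy of the constraint ``pass through $(x,z)$'' forces the remaining vertices to lift the path to exactly height $z$ at column $x$, which requires a $q$-bay $(x,y')\in V(\alpha)$ together with singleton flacets filling the positions $\{x+y'+1,\dots,x+z\}$. Applying the singleton analysis of Case~1 to any of these singletons produces an aligned bay pair in $V(\alpha)$ with vertical gap strictly exceeding $1$. The main technical obstacle is the disconnectedness computation for $M/\{x+y+1\}$ in the tight subcase of Case~1, which is the step where the full LPM structure (and not merely the constraint counting) is used; I would carry it out by writing down the bases explicitly in terms of the two sub-LPMs cut out by the bottleneck column and checking that no basis of $M/\{i\}$ mixes elements from the two sides.
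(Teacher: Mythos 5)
Your overall architecture mirrors the paper's: argue the contrapositive, isolate a redundant vertex $\gamma_0$ via the criterion at the end of Section~\ref{sec:bergmandef}, and split into cases according to whether $F_{\gamma_0}$ is a singleton or a fundamental flat. Your idea of ruling out the gap $z-y=1$ via disconnectedness of $M/\{i\}$ is essentially equivalent to the paper's appeal to the land-neck condition (by Lemma~\ref{rem:strait} these say the same thing), so that piece is sound. You omit the easy forward implication, which the paper does prove and which the Corollary uses, but that is a minor omission.

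The real gap is in the heart of your singleton case. You select the ``nearest bay vertices on either side of $i$'' and assert that if $x_-<x_+$ then step $i$ can be taken as $E$, so redundancy forces $x_-=x_+$. This assertion is doing all the work, and it is not justified. First, nothing guarantees that $V(\alpha)\setminus\{\gamma_0\}$ contains any bay vertex at all; if it consists entirely of singletons your $(x_\pm,y_\pm)$ do not exist as elements of $V(\alpha)$, and the conclusion that they are a $q$-bay and a $p$-bay in $V(\alpha)$ becomes vacuous. Second, even when they exist, between $(x_-,y_-)$ and $(x_+,y_+)$ you must still respect the remaining singleton constraints \emph{and} the bounding paths $p,q$ of $M$ (not merely of $M_\alpha$), and you give no argument that an $E$-step at position $i$ is compatible with all of those. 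This is exactly the point where the paper does real work: it first proves (by the step-swapping claim) that $M_\alpha$ has a \emph{node} $(x,i'-x)$, i.e.\ that every path of $M_\alpha$ is at a common lattice point after $i'$ steps -- a statement about the matroid type, not about which bays happen to be listed in $V(\alpha)$. Only then does the paper build the two modified paths $s'$, $t'$ and run the four-way case analysis $(s\mathrm{i})/(s\mathrm{ii})/(t\mathrm{i})/(t\mathrm{ii})$ to force the aligned bays to actually occur among the vertices of $\alpha$. Your sketch collapses these two nontrivial steps into the single unproven claim.

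The same problem recurs in your other case. You assert that redundancy of the bay constraint ``pass through $(x,z)$'' \emph{requires} a $q$-bay $(x,y')$ in $V(\alpha)$ together with all the singleton flacets $\{x+y'+1\},\dots,\{x+z\}$; again this is precisely what needs proof, and the paper supplies it through a separate claim showing that the lowest node of $M_\alpha$ in column $x$ lies on $q$. As written, your proposal describes the correct target configuration but repeatedly assumes, rather than derives, the existence of the aligned bays and the forced node.
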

\begin{proof}
Let the vertices of $\alpha$ include $F$ and $G$ as in (1) and (2) above. 
Then, for all $B\in\BB(M_\alpha)$, the lattice path $p(B)$ passes through $(x,y)$ and $(x,z)$. 
So it satisfies $p(B)_j=N$ for $j=x+y+1,\ldots, x+z$.
We conclude that if $F$ and $G$ correspond to vertices of $ V(\alpha)$ then all flacets 
in $\{x+y+1\},\ldots,\{x+z\}$ are forced by $F$ and $G$ to be vertices in $V(\alpha)$. More precisely, with $U:=V(\alpha)\setminus \{\{x+y+1\},\ldots,\{x+z\}\}$, we have 
$\BB(M_\alpha) = \bigcap_{\gamma\in U} \BB(M_\gamma)$.
 Since $z-y>1$, $U$ is a proper subset of $V(\alpha)$ and, in view of Remark \ref{rem:simp}%
, we conclude that $\alpha $ is not simplicial.

For the reverse implication, suppose $\alpha$ is not 
simplicial. Again by Remark \ref{rem:simp} there is $\gamma_0\in V(\alpha)$
such that
  \begin{equation}
  \BB(M_\alpha)=
  \bigcap_{\gamma\in U\setminus \{\gamma_0\}} \BB(M_\gamma)
  .\label{eq:2}
\end{equation}
We distinguish two cases.
\renewcommand{\descriptionlabel}[1]{\hspace{\labelsep}\emph{#1}}

\noindent {\em \underline{Case 1}: $F_{\gamma_0}$ is a fundamental flat.}
Without loss of generality $F_{\gamma_0}$ corresponds to a $p$-bay
$(x,z)$. In terms of Remark \ref{interpret}, Equation
\eqref{eq:2} means that every path satisfying the constraints of
$V(\alpha)\setminus \{\gamma_0\}$ passes through $(x,z)$. In particular $(x,z)$ is a node of $M_\alpha$. 
There may of course be other nodes south of $(x,z)$: in fact, the next claim proves that the southernmost node on the vertical of $(x,y)$ lies on the lower bounding path $q$.


\begin{description}
\item[Claim 1:] {\em Let $u$ be minimal such that $(x,u)$ is a node of $M_\alpha$. Then $(x,u)$ lies on $q$.}

\item[Proof of Claim 1.] 
First remark that any element of $\{ (x,u),\ldots,(x,z) \}$ is indeed a node of $M_\alpha$
Fix any path representing a basis of $M_\alpha$, say
$$
s=s_1\ldots s_{x+u-1} N \ldots N E s_{k} \ldots s_{r+m},
$$
where $k> x+z+1$.
Now consider the path
$$
s'=s_1\ldots s_{x+u-1} E N \ldots N  s_{k} \ldots s_{r+m}.
$$

If $s'=p(B)$ for a basis $B\in \BB(M)$, then it would satisfy the
constraints given by $F_\gamma$ for $\gamma\in V(\alpha)\setminus
\gamma_0$, since it passes all the same bays as $s$, and the only step where $s$ goes north but $s'$ doesn't is at $\{x+u\}$, which is not a vertex of $M_\alpha$ by choice of $u$ (otherwise $(x,u-1)$ would also be a node). However, $s'$ does not satisfy the constraint given by $F_{\gamma_0}$ since it does not pass 
through the $p$-bay $(x,z)$. Thus, the basis $B$ with $s'=p(B)$ would be an element in the right hand side but
not in the left hand side of Equation \eqref{eq:2}: a contradiction. 

We conclude that $s'\not\in\PP{p,q}$,
implying that the point $(x+1,u-1)$ is not a lattice point between $q$
and $p$, i.e., $(x,u)$ lies on $q$. \hfill $\square$
 \end{description}

Claim 1 shows in particular that there is a node $(x,u)$ of $M_\alpha$
 which lies on $q$. 
 If there is no $q$-bay of the form $(x,y)$, all paths of $M_\alpha$ go east in the $(x+u)$-th step (the step before reaching $(x,u)$). 
 But this is a contradiction to $M_\alpha$ being loopless. 
So there has to be a $q$-bay $(x,y)$ which is also a node of
$M_\alpha$. Now, since $M$ is connected, $y<z$; moreover, in order for
all paths satisfying the constraints of
$V(\alpha)\setminus\{\gamma_0\}$ to pass through $(x,z)$, $V(\alpha)$
must contain all singletons $\{x+y\},\ldots ,\{x+z\}$. In particular,
none of these is a land neck, thus $z-y>1$, q.e.d.


\noindent {\em \underline{Case 2}: $F_{\gamma_0}$ is a singleton $\{i'\}$.} Then 
  every path $s$ of $M_\alpha$ has $s_{i'}=N$, and we start by proving that if every path of $M_\alpha$ must go north
at step $i'$, then they all do so at a  node.

  \begin{description}
\item[Claim 2:] {\em $M_\alpha$ has a node $(x_1,x_2)$
      with $x_1+x_2=i'$.}

\item[Proof of Claim 2.] To see this, consider two paths $s,s'$ of $M_\alpha$ such that
    $s_1\ldots s_{i'}$ ends at $(y_1,y_2)$ and $s'_1\ldots s'_{i'}$
    ends at $(y_1',y_2')$ with $y'_2\geq y_2$. We want to prove that
    $y_2'=y_2$. 

    By way of contradiction suppose $y_2'-y_2 >0$ and let $(a_1,a_2)$,
    $(b_1,b_2)$ with $a_1+a_2<i'<b_1+b_2$ be on both $s,s'$ and such
    that the path $s_{a_1+a_2+1}\ldots s_{b_1+b_2}$ is always below
    $s'_{a_1+a_2+1}\ldots s'_{b_1+b_2}$. In particular,
    $s'_{a_1+a_2+1}=N$, $s'_{b_1+b_2}=E$ and there are no nodes of
    $M_\alpha$ between $(a_1,a_2)$ and $(b_1,b_2)$. The path $$
    s'_1\ldots s'_{a_1+a_2}N s_{a_1+a_2+2} \ldots s_{b_1+b_2-1} E
    s_{b_1+b_2+1}\ldots s_{r+m}$$ is thus a path of $M_\alpha$ that
    after $i'$ steps ends at a point
    $(y_1'',y_2'')$ with $y''_2=y_2+1$.\\
    By repeating this operation we can assume without loss of generality that $y_2'-y_2 = 1$. Now in this case, since the $i$th step of $s'$ is $N$, the
    path
$$
s'_1\ldots s'_{i'-1} E s_{i'+1} \ldots s_{r+m}
$$
represents an element of the right-hand side but not on the left-hand side of
\eqref{eq:2}: a contradiction. \hfill$\square$
\end{description}

Thus we see that all paths corresponding to bases of $M_\alpha$ must
pass through a common node, of the form $(x,i'-x)$, for some $x$.
Since $F_{\gamma_0}=\{i'\}$, before this node all paths must go
North. Thus, both $(x,i'-x-1)$ and $(x,i'-x)$
are nodes of $M_\alpha$.

Since $\{i'\}$ is not a land neck, we know that the points $(x,z)$ and
$(x,y)$ where the vertical line through $(x,i'-x)$ meets $p$ resp. $q$
must lie more than one unit apart, i.e., $z-y>1$.

To prove the theorem it now suffices to prove the following claim.

\begin{description}
\item[Claim 3:] {\em There is a $p$-bay $(x,z)$ north and a $q$-bay
    $(x,y)$ south of $(x,i'-x)$, and both the flacets
    corresponding to $(x,y)$ and $(x,z)$ are vertices of $\alpha$.}

\item[Proof of Claim 3.] Let $(x,u)$ be the lowest node of $M_\alpha$ south of $(x,i'-x-1)$.
Then there is a path $s$ of $M_{\alpha}$ with $s_{x+u}=E$.
Similarly let $v$ be maximal such that $(x,v)$ is a node of $M_{\alpha}$ and
 let $t$ be a path of $M_{\alpha}$ with $t_{x+v+1}=E$.
Consider the paths
\begin{align*}
  s' &:= s_1 \ldots s_{x+u-1}N \ldots N E s_{i'+1} \ldots s_{r+m}, \\
   t' &:= t_1 \ldots t_{i'-1} E N \ldots N t_{x+v+2} \ldots
  t_{r+m}.
\end{align*}

\begin{figure}[h]
  \centering
  \scalebox{0.8}{\input{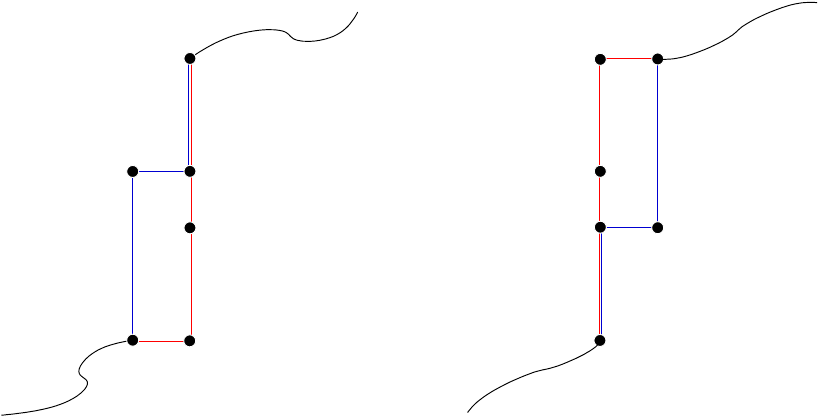_t}}
  \caption{The paths $s$, $t$, $s'$ and $s$ in the proof of Claim 3.}
\end{figure}

Neither $s'$ nor $t'$ can be a path
of $M_\alpha$, because they violate
the constraint of $F_{\gamma_0}$ by $s'_{i'}=t'_{i'}=E$.
Therefore, in order for Equation \eqref{eq:2} to hold, for each of the paths $s'$
and $t'$ either
\begin{list}{}{}
\item[(i)] it is not an element of $\PP{p,q}$ at all, or 
\item[(ii)] it must violate the constraint
  given by some $F_\gamma$ with $\gamma\in V(\alpha)\setminus
  \{\gamma_0\}$.
\end{list}
Notice that in the second case, since the only step where $s'$ goes east but $s$ does not is $i'$,
the vertex $\gamma$ can not correspond to a singleton, and thus it
must correspond to a bay. In our particular setup we obtain the
following case analysis.

For $s'$ not to be a path of $M_\alpha$ one of the following must occur:
\begin{list}{}{}
\item[($s$i)] the point $(x,i'-x-1)$ lies on $p$ (since $s'$ passes through
  $(x-1,i'-x)$), or
\item[($s$ii)] there is $\gamma\in V(\alpha)$ such that $F_\gamma$
  corresponds to a $q$-bay $(x,y)$ between $(x,i'-x-1)$ and $(x,u)$. 
\end{list}

Similarly, for $t'$ not to be a path of $M_\alpha$, either
\begin{list}{}{}
\item[($t$i)] the point $(x,i'-x)$ lies on $q$ (since $t'$ passes through 
  $(x+1,i'-x-1)$), or
\item[($t$ii)] there is $\gamma\in V(\alpha)$ such that $F_\gamma$
  corresponds to a $p$-bay $(x,z)$ between $(x,i'-x)$ and $(x,v)$. 
\end{list}
Now, if both ($s$i) and ($t$i) were true, $M(p,q)$ would not be
connected, contradicting the hypothesis; on
the other hand, ($s$i) and ($t$ii) together imply that there is a
point on $p$ south of a $p$-bay, and similarly
from ($t$i) and ($s$ii) follows the existence of a point on $q$ that is
north of a $q$-bay. 
We conclude that both ($s$ii) and ($t$ii) must hold, proving the claim. 
\hfill$\square$
\end{description}

\end{proof}

\begin{corollary}\label{cor:iffs}
  The Bergman complex of a connected lattice path matroid $M(p,q)$ is
  simplicial if and only if every pair of vertically aligned bays determines a
  land neck (i.e., if $(x_1,x_2)\in U_p$ and $(x_1,x_2')\in U_q$, then
  $x_2-x_2' = 1$).
\end{corollary}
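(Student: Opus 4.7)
The plan is to derive the corollary directly from Theorem \ref{maintheo1}.

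For the ``if'' direction, suppose every pair of vertically aligned bays $(x_1, x_2) \in U_p$ and $(x_1, x_2') \in U_q$ satisfies $x_2 - x_2' = 1$. If some face $\alpha$ of $\Gamma_M$ were non-simplicial, Theorem \ref{maintheo1} would exhibit aligned bays $(x, z) \in U_p$ and $(x, y) \in U_q$ with $z - y > 1$ whose fundamental flats are vertices of $\alpha$, contradicting the hypothesis.

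For the ``only if'' direction, I would argue the contrapositive and exhibit a non-simplicial face of $\Gamma_M$ from any aligned bay pair $(x, z) \in U_p$, $(x, y) \in U_q$ with $z - y \geq 2$ (the connectedness of $M(p,q)$ rules out $z \leq y$). The fundamental flats $F = \{1, \ldots, x+z\}$ and $G = \{x+y+1, \ldots, m+r\}$ are flacets by Lemma \ref{rem:strait}. Setting $\alpha := (F \cap G)^\vee \in \FF^\vee(M)$, both $F^\vee$ and $G^\vee$ lie in $V(\alpha)$; so Theorem \ref{maintheo1} would yield non-simpliciality of $\alpha$, provided $\alpha$ is itself a face of the Bergman complex.

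The main obstacle is thus verifying that $\alpha \in \Gamma_M$, i.e., that the matroid type $M_\alpha$ is loopfree. The bases of $M_\alpha$ are exactly the lattice paths in $\PP{p,q}$ passing through both $(x,y)$ and $(x,z)$, which forces $N$-steps at positions $[x+y+1, x+z]$ and separates the remaining problem into independent sub-problems on the sub-regions before $(x,y)$ and after $(x,z)$. Thus $M_\alpha$ decomposes as a direct sum $M_1 \oplus T \oplus M_2$, where $M_1, M_2$ are the lattice path matroids on $[1, x+y]$ and $[x+z+1, m+r]$ and $T$ is a set of $z-y$ coloops. To prove $M_1$ loopfree at any $i \in [1, x+y]$ with $q_i = E$, I would construct a basis of $M_1$ with $N$ at step $i$ by swapping the $E$ at step $i$ of $q$ with a later $N$-step of $q$ at some $j \in (i, x+y]$; such a $j$ exists because $q$ first reaches height $y$ at step $x+y$, so $q$'s height at step $i-1$ is strictly below $y$. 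The resulting path stays above $q$ (its height exceeds $q$'s by one on $[i, j-1]$) and below $p$ (since connectedness forces $p$'s height to exceed $q$'s by at least one at every interior step). A symmetric argument handles $M_2$, completing the verification.
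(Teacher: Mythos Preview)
Your proof is correct. The paper gives no separate proof of this corollary, treating it as an immediate consequence of Theorem~\ref{maintheo1}. Your argument makes explicit the one point the paper leaves to the reader: for the ``only if'' direction one must actually exhibit a face of $\Gamma_M$ whose vertex set contains the two offending fundamental flats, and then invoke the $(\Leftarrow)$ direction of Theorem~\ref{maintheo1}. Your construction of $\alpha$ as the dual of the intersection of the two facets, together with the decomposition $M_\alpha \cong M_1 \oplus T \oplus M_2$ and the elementary path-swap argument showing each summand is loopfree, does precisely this and is the natural way to complete the argument. The paper's implicit reasoning is the same, as witnessed by Example~\ref{ex2} and the later description of $M(\omega)$ in Section~\ref{sec:faces}; you have simply spelled it out.
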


\begin{example}
 \label{ex2}
Consider the lattice path matroid given in Figure \ref{fig:ex}.
The face $\alpha$ whose vertices correspond to the bays $(4,3),(4,6)$
and the singletons $8,9,10$ in Figure \ref{fig:ex} is a
minimal non-simplicial face.
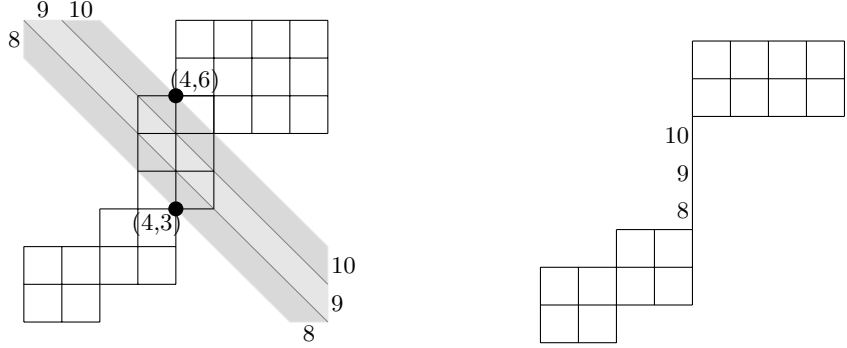
\begin{figure}[h]
\centering
\begin{subfigure}[b]{0.3\textwidth}
\begin{tikzpicture}
\filldraw[color=gray!20,fill=gray!20] (4,0) -- (4,0.5) -- (0.5,4) -- (0,4);
\filldraw[color=gray!20,fill=gray!30] (4,0) -- (3.5,0) -- (0,3.5) -- (0,4);
\filldraw[color=gray!20,fill=gray!30] (4,0.5) -- (4,1) -- (1,4) -- (0.5,4);
\draw [help lines] (4,0) -- (0,4);
\draw [help lines] (4,0.5) -- (0.5,4);
\draw (0,0) grid [step=0.5cm] (1,1);
\draw (1,0.5) grid [step=0.5cm] (2,1.5);
\draw (1.5,1.5) grid [step=0.5cm] (2.5,3);
\draw (2,2.5) grid [step=0.5cm] (4,4);
\draw (1,1) -- (1,1.5);
\draw (1.5,1.5) -- (1.5,3);
\draw (2,1.5) -- (2.5,1.5);
\draw (2,3) -- (2,4);
\draw (2.5,2.5) -- (4,2.5);
\draw (3.75,0) node[inner sep=1pt][anchor=north] {\small 8};
\draw (0,3.75) node[inner sep=1pt][anchor=east] {\small 8};
\draw (4,0.25) node[inner sep=1pt][anchor=west] {\small9};
\draw (0.25,4) node[inner sep=1pt][anchor=south] {\small9};
\draw (4,0.75) node[inner sep=1pt][anchor=west] {\small10};
\draw (0.75,4) node[inner sep=1pt][anchor=south] {\small10};
\node [fill=black,shape=circle,inner sep=2pt,] at (2,1.5){};
\node [fill=black,shape=circle,inner sep=2pt] at (2,3){};
\node [] at (1.75,1.3){\small(4,3)};
\node [] at (2.25,3.2){\small(4,6)};
\end{tikzpicture}
\end{subfigure}
\hspace{3cm}
\begin{subfigure}[b]{0.3\textwidth}
\begin{tikzpicture}
\draw (0,0) grid [step=0.5cm] (1,1);
\draw (1,0.5) grid [step=0.5cm] (2,1.5);
\draw (2,3) grid [step=0.5cm] (4,4);
\draw (1,1) -- (1,1.5);
\draw (2,1.5) -- (2,3);
\draw (2,3) -- (2,4);
\draw (2,3) -- (4,3);
\draw (2,1.75) node[inner sep=1pt][anchor=east] {\small 8};
\draw (2,2.25) node[inner sep=1pt][anchor=east] {\small 9};
\draw (2,2.75) node[inner sep=1pt][anchor=east] {\small 10};
\end{tikzpicture}
\end{subfigure}
\caption{Illustration of flacets of the Bergman complex that define a non-simplicial face as well as a lattice path representation of the matroid type of this face.}
\label{fig:ex}
\end{figure}
\end{example}

\section{Combinatorial structure}\label{sec:faces}


\subsection{The poset of faces}

Let $M=M(p,q)$ be a lattice path matroid with set of bays $U$ and set
of land necks $S(p,q)$. Define a partial order on $U$ by
setting
$$
(x_1,x_2) < (y_1,y_2) \textrm{ if and only if } x_1 \leq y_1
\textrm{ and }
x_2 < y_2.
$$
%
We will denote with
$\Delta(U)$ the set of chains (i.e., totally ordered subsets) of $U$, ordered by inclusion.
Every chain
$$
\omega:=\{ (a_1,b_1)< (a_2,b_2) < \cdots \}
$$
of $U$ defines a partition
\newcommand{\lang}[1]{\vert #1 \vert}
$$
\pi(\omega) = \pi_1(\omega) \uplus  \ldots \uplus \pi_{\lang{\omega}+1}(\omega)
$$
of the set $[m+r]$ with $j$-th block $\pi_j(\omega):=\{a_{j-1}+b_{j-1}+1 ,\ldots
,a_{j}+b_{j}\}$ for $j=1,\ldots, \lang{\omega}+1$, where we set $(a_0,b_0)=(0,0)$ and
$(a_{\lang{\omega}+1},b_{\lang{\omega}+1})=(m,r)$.

\begin{definition}
  The chain $\omega$ defines a matroid 
 $$
 M(\omega)=M_1(\omega) \oplus \ldots \oplus M_{\lang{\omega}+1}(\omega)
 $$
 where $M_j(\omega)$ is
 the lattice path matroid represented by the lattice paths from
 $(a_{j-1},b_{j-1})$ to $(a_j,b_j)$ lying between $p$ and $q$. 
\end{definition}

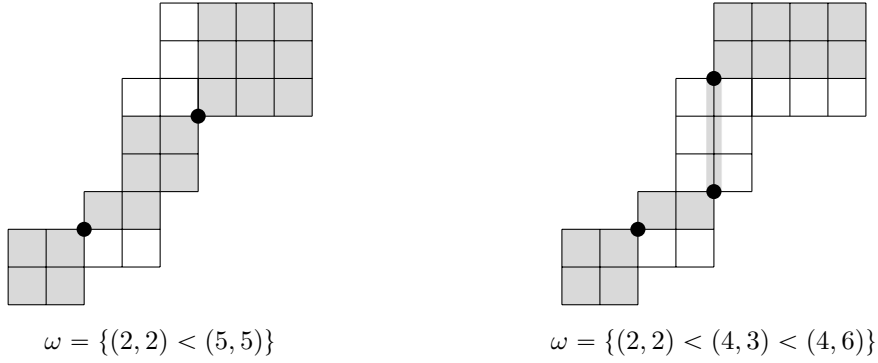
\begin{figure}[h]
\centering
\begin{subfigure}[b]{0.3\textwidth}
\begin{tikzpicture}
\filldraw[color=gray!20,fill=gray!30] (0,0) -- (1,0) -- (1,1) -- (0,1);
\filldraw[color=gray!20,fill=gray!30] (1,1) -- (2,1) -- (2,1.5) -- (1,1.5);
\filldraw[color=gray!20,fill=gray!30] (1.5,1.5) -- (2.5,1.5) -- (2.5,2.5) -- (1.5,2.5);
\filldraw[color=gray!20,fill=gray!30] (2.5,2.5) -- (4,2.5) -- (4,4) -- (2.5,4);
\draw (0,0) grid [step=0.5cm] (1,1);
\draw (1,0.5) grid [step=0.5cm] (2,1.5);
\draw (1.5,1.5) grid [step=0.5cm] (2.5,3);
\draw (2,2.5) grid [step=0.5cm] (4,4);
\draw (1,1) -- (1,1.5);
\draw (1.5,1.5) -- (1.5,3);
\draw (2,1.5) -- (2.5,1.5);
\draw (2,3) -- (2,4);
\draw (2.5,2.5) -- (4,2.5);
\node [fill=black,shape=circle,inner sep=2pt,] at (1,1){};
\node [fill=black,shape=circle,inner sep=2pt] at (2.5,2.5){};
\node [] at (2,-0.5){$\omega = \{(2,2)<(5,5)\}$};
\end{tikzpicture}
\end{subfigure}
\hspace{3cm}
\begin{subfigure}[b]{0.3\textwidth}
\begin{tikzpicture}
\filldraw[color=gray!20,fill=gray!30] (0,0) -- (1,0) -- (1,1) -- (0,1);
\filldraw[color=gray!20,fill=gray!30] (1,1) -- (1,1.5) -- (2,1.5) -- (2,1);
\filldraw[color=gray!20,fill=gray!30] (2,3) -- (2,4) -- (4,4) -- (4,3);
\filldraw[color=gray!20,fill=gray!30] (1.9,1.5) -- (2.1,1.5) -- (2.1,3) -- (1.9,3);
\draw (0,0) grid [step=0.5cm] (1,1);
\draw (1,0.5) grid [step=0.5cm] (2,1.5);
\draw (1.5,1.5) grid [step=0.5cm] (2.5,3);
\draw (2,2.5) grid [step=0.5cm] (4,4);
\draw (1,1) -- (1,1.5);
\draw (1.5,1.5) -- (1.5,3);
\draw (2,1.5) -- (2.5,1.5);
\draw (2,3) -- (2,4);
\draw (2.5,2.5) -- (4,2.5);
\node [fill=black,shape=circle,inner sep=2pt,] at (2,1.5){};
\node [fill=black,shape=circle,inner sep=2pt] at (2,3){};
\node [fill=black,shape=circle,inner sep=2pt] at (1,1){};
\node [] at (2,-0.5){$\omega = \{(2,2)<(4,3)<(4,6)\}$};
\end{tikzpicture}
\end{subfigure}
\caption{The direct summands for $M(\omega)$ are the lattice path
  matroids that appear in the shaded regions.
  }
\label{fig:minor}
\end{figure}


\begin{remark}
 The matroid $M(\omega)$ is a weak map image of $M$. 
  Also, notice that for $\omega\in \Delta(U)$ every $M_j(\omega)$ is loopfree. 
\end{remark}


Let the set 
$\mathcal I:= \mathscr P([m+r]\setminus S(p,q))$ 
be partially ordered by inclusion.

\def\QQ{\mathcal Q}
\begin{definition}\label{def:Q}  Given a lattice path matroid $M$ recall the
  above notations and
  define $\QQ(p,q)$ as the subposet of the product poset $\Delta(U)\times \mathcal I $ defined by:
  \begin{displaymath}
    \left\{(\omega, J)\in \Delta(U)\times \mathcal I~  \middle\vert
    \begin{array}{lr}
    \text{For }2\leq i \leq \lang{\omega}:   \\ 
	\pi_i(\omega) \cap J ~= ~ \pi_i(\omega)&\hspace{-2cm}\textrm{ if } a_i=a_{i-1}, b_{i}-b_{i-1}>1;\\
	\pi_i(\omega) \cap J ~= \emptyset &\hspace{-2cm}\textrm{ if } a_i=a_{i-1}, b_{i}-b_{i-1}=1;\\
        \pi_i(\omega) \cap J \textrm{ is an independent flat of } M_i(\omega) &\textrm{ otherwise}\\
      \end{array}
   \right\}.
  \end{displaymath}
\end{definition}



\begin{theorem}\label{maintheo2}
  For any connected lattice path matroid $M=M(p,q)$ the posets
 $
 \Gamma_{M(p,q)} $ and $ \QQ(p,q)
 $
 are isomorphic.
\end{theorem}

\begin{proof}
Recall that vertices of $\Gamma_M$ are the set of bays and non-land necks singletons. 
  Faces of $\Gamma_M$ are cells of $P_M^\vee$, and thus uniquely
  determined by their vertices.

We consider a face $\alpha$ of the Bergman complex of $M$ and recall
that, with Lemma \ref{rem:strait}, its set $V(\alpha)$ of vertices consists of two types of
elements: bays (identifying vertices corresponding to fundamental
flats) and singletons. Accordingly, we have a partition
$V(\alpha)=V(\alpha)\cap U \uplus V(\alpha)\cap \mathcal I$. We first
show that the pair  $(V(\alpha)\cap U, V(\alpha)\cap \mathcal I)$ is
contained in $\mathcal Q(p,q)$.
\begin{itemize}
\item[(a)] $V(\alpha)\cap U\in \Delta(U)$. \\ To prove this, consider two
 points $(a,b),\, (c,d)\in V(\alpha)\cap
  U$ such that $a\leq c$ and 
 $b\geq d$. Every lattice path $p(B)$ for  $B\in M_\alpha$ must pass through both points, therefore $a\leq c$
  implies $b\leq d$, so $b=d$.
 Now we have $p(B)_j=E$ for all $B\in \BB(M_\alpha)$ and every
  $j\in \{a+b +1,\ldots, c+b\}$ --- but since $M_\alpha$ by definition
  must be loopless, there can't be any such $j$, thus $a=c$ and we are
  done.
\end{itemize}
Write $\omega= (a_1,b_1) < \cdots < (a_k,b_k)$ for the chain
$V(\alpha)\cap U$, write $J:=V(\alpha)\cap \mathcal I$ and let $i\in [k]$.
\begin{itemize}
\item[(b)] If $a_{i-1}=a_i$, then either $b_i-b_{i-1}=1$ \emph{i.e.}
  the two bays define a land neck and
  $\pi_i(\omega)=\{a_i+b_i\}\subseteq S[p,q]$ and so $\emptyset =
  \pi_i(\omega)\cap \mathcal I \supseteq \pi_i(\omega)\cap J$, as
  required. Otherwise,  we are in the non-simplicial situation
  of the proof of Theorem \ref{maintheo1}, and every element of
  $\pi_i(\omega)=\{a_{i-1}+b_{i-1}+1,\ldots, a_i+b_i\}$ is, as a
  singleton flacet, a vertex of $\alpha$. Therefore, $\pi_i(\omega)
  \cap J = \pi_i(\omega)$.
\item[(c)] If $a_{i-1}<a_{i}$, then $\pi_i(\omega)\cap J  $ is
  an independent flat of $M$.\\ 
  Namely, in this case, thinking of paths satisfying constraints
  (according, e.g., to Remark \ref{rem:FBCLP}), the set of lattice paths
  $\{p(B)_{a_{i-1}+b_{i-1}+1}\cdots p(B)_{a_{i}+b_{i}} : B\in \BB( M_\alpha)\}$ 
  is the set of bases of a matroid $M'_i$  isomorphic to 
  $$M_i(\omega)/(J\cap \pi_i(\omega)) \oplus M_i(\omega)[J\cap \pi_i(\omega)].$$ 

Now, $\pi_i(\omega) \cap J$ is independent because every basis of $M_\alpha$ 
  contains $J$ (the paths representing these bases must satisfy the constraints in $J$, i.e., go north at every element of $J$). 
  The loop-freeness of $M_\alpha$ gives the loop-freeness of $M'_i$ (which is a direct summand in the decomposition of $M_\alpha$). In particular we see that $M_i(\omega)/(J\cap \pi_i(\omega))$ is loopfree. 
Then, $J\cap \pi_i(\omega)$ is a flat of $M_i(\omega)$ (e.g., by \cite[Exercise 3.1.8]{Ox11}). 
\end{itemize}

\noindent In view of (a), (b), (c) above, the following function is
well defined and clearly order-preserving.

\begin{equation*}
    \label{eq:1}
    \psi : \Gamma_M \to \QQ(p,q);\quad
                V \mapsto (V\cap U, V\cap \mathcal I)
  \end{equation*}
Moreover, it admits an order-preserving inverse given by
  \begin{equation*}
    \label{eq:23}
        \QQ(p,q) \to \Gamma_M; \quad        (\omega, J) \mapsto \omega\cup J,
  \end{equation*}
is easily seen to be well-defined, as it sends $(\omega,J)$ to the
vertex set of the face $\alpha$ with
$$
M_\alpha= 
\bigoplus_{a_{i-1}\neq a_i}
M_i(\omega)/(J\cap \pi_i(\omega))\oplus M_{i}(\omega)[J\cap \pi_i(\omega)] ~ \oplus \bigoplus_{a_{i-1}=a_i} M_i(\omega)
$$
which is loopfree because all its direct summands are.
\end{proof}

\subsection{Polyhedral structure of faces}

  As a face of $P^\vee$, every face of $\Gamma_M$ is the convex hull
  of its vertices. We would like to characterize such polyhedra.

%
    Recall Definition \ref{def:S} and, for  a subset $X\subseteq U\cup ([m+r]\setminus S(p,q))$, let
     $
\gamma(X)$
    denote the set of vertices $v$ of $P^\vee$ for which
     $F_v\in X$ either is an element of $X\setminus U$ or corresponds
     to a bay in $X\cap U$.

     \def\conv{\operatorname{conv}}
    Given a set of points $A$ in general position let $\Delta_{A}=\conv(A)$ denote the simplex on
    the vertex set $A$. Moreover, let $\lozenge_A$ be the polytope
    obtained as  the suspension of $\Delta_A$, see \cite[Section 2.2]{Kozlov} for a formal definition of suspensions. 

  \begin{remark}
    Recall that the poset of faces of the {\em join} $P\ast Q$ of two
    polytopes $P$ and $Q$ is obtained from the face posets
    $\widehat{\FF}(P)$ and $\widehat{\FF}(Q)$ (these are the face
    posets of $P$ and $Q$, each with an added unique smallest element
    $0_{P}$ resp. $0_{Q}$) by

\begin{displaymath}
  \widehat{\FF}(P\ast Q) \simeq \widehat{\FF}(P)\times \widehat{\FF}(Q).
\end{displaymath}

\end{remark}

\begin{corollary}\label{cor:poly}
  The face of $\Gamma_M$ corresponding
  to $(\omega,J)$ is a join
  \begin{displaymath}
  P(\omega,J):=\Delta_{\gamma(A)} \asterisk \left(\underset{i:~a_{i-1}<a_i}{\Asterisk}\Delta_{\gamma(J\cap \pi_i(\omega))}
 \right)\ast \left(\underset{\substack{i:~a_{i-1}=a_i,\\ b_i-b_{i-1}>1}}{\Asterisk}\lozenge_{\gamma(\pi_i(\omega))}
 \right)
  \end{displaymath}
  where $A:=\{(a_i,b_i)\in \omega :  a_{i-1}=a_i \Rightarrow \vert b_{i-1}-b_i\vert=0 \}$ is the
  {set} of all bays in $\omega$ that either form a land neck with some other bay of $\omega$ or are not vertically aligned with any other bay of $\omega$ at all. 
\end{corollary}

\begin{remark}
  Notice that if there are no two vertically aligned bays $(a_{i-1},b_{i-1}),(a_i,b_i)$ that do not form a land neck($a_{i-1}=a_i, b_i-b_{i-1}>1$), then every term
  of the above join is a simplex, and thus $P(\omega,J)$ is a
  simplex, in agreement with the condition found in Theorem \ref{maintheo1}.
\end{remark}

\begin{proof}
  Since the vertex set of $P(\omega,J)$ is precisely
  $\gamma(\omega\cup J)$, it is enough to prove
  isomorphism of face posets.

  Given a set $X$ and distinct elements $y_1,y_2\not\in X$, write $B_X$ for the poset of all subsets of $X$,
  $C_{X,y_1,y_2}$ for the poset of all subsets $Y\subseteq X\cup\{y_1,y_2\}$
  such that $\{y_1,y_2\}\subseteq Y$ if and only if $X\subseteq Y$. Then
  $B_X\simeq \widehat{\FF}(\Delta_{X})$ and $C_{X,y_1,y_2}\simeq
  \widehat{\FF}(\lozenge_X)$ and the face poset of $P(\omega,J)$ is isomorphic to
  \begin{displaymath}
    {B_{A}} \times \prod_{a_{i-1}< a_i} {B}_{J\cap \pi_i(\omega)} \times
\underset{\substack{i:~a_{i-1}=a_i,\\ b_i-b_{i-1}>1}}{\prod}
      C_{\pi_i(\omega),(a_{i-1},b_{i-1}),(a_i,b_i)}.
  \end{displaymath}
  Now to prove that the map $$\QQ(p,q)_{\leq (\omega,J)}\to
  \FF(P(\omega,J)),$$
  $$(\omega',J')\mapsto (\omega'\cap A,
  \underbrace{J'\cap\pi_i(\omega),\ldots}_{a_{i-1}<a_i},\underbrace{(\{(a_{i-1},b_{i-1}),(a_i,b_i)\}\cap\omega')
    \cup (J'\cap\pi_i(\omega)),\ldots}_{a_{i-1}=a_i, b_i-b_{i-1}>1} ) $$ is a poset isomorphism amounts to a
  routine check in view of Theorem \ref{maintheo1} and of the fact
  that, for every $(\omega,J)\in \QQ(p,q)$, we get that $(\omega,J')$ is in $\QQ(p,q)$ for
  all $J'\subseteq J$. Here, loop-freeness of the quotients
  $M_i(\omega)/(J'\cap\pi_i(\omega))$ is implied by loop-freeness of
  $M_i(\omega)/(J\cap\pi_i(\omega))$ since subsets of independent flats are again independent flats.
\end{proof}

\begin{example}
Consider the lattice path matroid given in Figure \ref{fig:ex} on page \pageref{fig:ex}.
The face $\alpha$ whose vertices correspond to the bays $(4,3),(4,6)$
and the singletons $8,9,10$ in Figure \ref{fig:ex} is a triangular
bipyramid obtained by suspending a $2$-simplex (whose vertices
correspond to the singletons $8$, $9$, $10$) between two additional
vertices (corresponding to the bays $(4,3)$ and $(4,6)$).
%
\end{example}

\bibliographystyle{plain}
\bibliography{lpmbib}{}

\end{document}